\numberwithin{equation}{theorem}
\renewcommand{\m}{\mathfrak{m}}
\DeclareMathOperator{\depth}{depth}
\DeclareMathOperator{\chara}{char}
\theoremstyle{theorem}
\begin{document}
\title{Maximal Cohen-Macaulay modules over certain Segre products}
\author{Linquan Ma}
\address{Department of Mathematics\\ University of Utah\\ Salt Lake City\\ UT 84112}
\email{lquanma@math.utah.edu}
\thanks{\hspace{1em} The author is partially supported by NSF Grant DMS \#1600198, and NSF CAREER Grant DMS \#1252860/1501102.}
\maketitle

\begin{center}
{\textit{Dedicated to Professor Gennady Lyubeznik on the occasion of his 60th birthday}}
\end{center}

\begin{abstract}
We prove some results on the non-existence of rank one maximal Cohen-Macaulay modules over certain Segre product rings. As an application we show that over these Segre product rings there do not exist maximal Cohen-Macaulay modules with multiplicity less than or equal to the parameter degree of the ring. This disproves a conjecture of Schoutens \cite{SchoutensMCMoverlocaltoricrings}.
\end{abstract}

%We prove some results on the (non)existence of rank one maximal Cohen-Macaulay modules over certain Segre products. As an application we show that over these Segre product rings there do not exist maximal Cohen-Macaulay modules with multiplicity less than or equal to the parameter degree of the ring. This disproves a recent conjecture of Schoutens \cite{SchoutensMCMoverlocaltoricrings}.

\section{Introduction}

A finitely generated module $M$ over a Noetherian local ring $(R,\m)$ is called a {\it maximal Cohen-Macaulay module}, abbreviated as MCM or small MCM,\footnote{``Small" here only means $M$ is finitely generated. This notion was introduced in comparison with the notion of {\it big Cohen-Macaulay modules} \cite{HochsterBigCMmodulesandalgebrasandembeddabilityofWittvectors}: modules that are not necessarily finitely generated but are Cohen-Macaulay in a natural sense. However, throughout this article we only deal with finitely generated modules, hence sometimes we simply use MCM instead of small MCM.} if $\depth M=\dim R$. A fundamental and long-standing open question in commutative algebra is that whether every complete local ring $(R,\m)$ admits a finitely generated MCM. This question was known to be true if $\dim R\leq 2$, if $\dim R=3$ and $R$ is $\mathbb{N}$-graded over a field of characteristic $p>0$ (Hartshorne--Peskine-Szpiro--Hochster, see \cite{HochsterBigCMmodulesandalgebrasandembeddabilityofWittvectors}), and in some other special cases (e.g., affine toric rings \cite{HochsterRingsofinvariantsoftorionomialsandpolytopes}). The importance of this question lies in the fact that an affirmative answer will imply Serre's conjecture on positivity of intersection multiplicities \cite{HochsterCohenMacaulaymodules}.

Schoutens \cite{SchoutensMCMoverlocaltoricrings} introduced a stronger version of MCM: in equal characteristic,\footnote{In mixed characteristic the definition of very small MCM is slightly different, we will not work with mixed characteristic in this paper.} $M$ is called a {\it very small} MCM over $(R,\m)$ if $M$ is a finitely generated MCM and $$e(M)\leq \min\{l(R/I)| \mbox{$I$ is generated by a system of parameters}\}$$ where $e(M)$ denotes the multiplicity of $M$ with respect to the maximal ideal $\m$ in $R$ and the right hand side is called the {\it parameter degree} of $R$.

We note that for local rings of dimension $\leq 2$ and for affine toric rings, very small MCM do exist (it is not hard to see that the normalization will do the job in these cases \cite{HochsterRingsofinvariantsoftorionomialsandpolytopes} \cite{HochsterCohenMacaulaymodules}). Some other existence results were established in \cite{SchoutensMCMoverlocaltoricrings}. Moreover, it turns out that under mild conditions, if very small MCM exist in general in characteristic $p>0$, then a reduction to $p>0$ technique will guarantee the existence in characteristic $0$ \cite{SchoutensMCMoverlocaltoricrings} (such reduction process is not known for small MCM). Based on these facts, Schoutens made the following conjecture:
\begin{conjecture}[{\it cf.} Conjecture 1.1 in \cite{SchoutensMCMoverlocaltoricrings}]
\label{conjecture--very small MCM}
Any compete local ring admits a very small MCM.
\end{conjecture}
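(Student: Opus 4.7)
Since the paper disproves Conjecture~\ref{conjecture--very small MCM}, my proposal is to exhibit explicit Segre product rings violating it. The natural candidates are $R = k[x_0,\ldots,x_m] \# k[y_0,\ldots,y_n]$, the homogeneous coordinate ring of the Segre embedding $\P^m \times \P^n$, localized at the irrelevant maximal ideal; these are normal domains of dimension $m+n+1$ with $e(R) = \binom{m+n}{m}$, and crucially they are \emph{not} Cohen-Macaulay once $\min(m,n) \ge 1$ and $\max(m,n) \ge 2$. The target is to produce such $(m,n)$ for which no MCM $M$ satisfies $e(M) \le $ the parameter degree of $R$.

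The first step is a numerical reduction. For any torsion-free module $M$ over a local domain $R$, the associativity formula at the generic point gives $e(M) = \rank(M)\cdot e(R)$. An MCM over a local domain is torsion-free, since any associated prime $\mathfrak{p}$ satisfies $\dim R/\mathfrak{p} \ge \depth M = \dim R$, forcing $\mathfrak{p}=0$. So if one can verify that the parameter degree of $R$ is strictly less than $2e(R)$ by writing down an explicit short system of parameters and bounding $\ell(R/I)$, then any MCM with multiplicity at most the parameter degree must have rank one. The problem reduces to the statement advertised in the abstract: $R$ has no rank one MCM.

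Rank one reflexive modules over the normal domain $R$ are classified by its divisor class group, which is a quotient of $\Pic(\P^m\times\P^n) = \mathbb{Z}^2$ by the polarization $\O(1,1)$, hence $\mathbb{Z}$. Concretely they are the section modules $M_{a,b} = \bigoplus_{k} H^0(\P^m \times \P^n, \O(a+k,b+k))$. Via the Serre--Grothendieck comparison, the MCM condition on $M_{a,b}$ is equivalent to the vanishing of $H^i(\P^m \times \P^n, \O(a+k,b+k))$ in all intermediate degrees $0<i<m+n$ and for all $k$. The K\"unneth formula decomposes each such group as a direct sum of products $H^p(\P^m,\O(a+k))\otimes H^q(\P^n,\O(b+k))$, and for any fixed divisor class one can find a twist $k$ producing nonvanishing intermediate cohomology on at least one factor, since no single line bundle on $\P^N$ with $N\ge 2$ has all intermediate cohomology zero. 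This rules out every rank one candidate.

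\textbf{Main obstacle.} I expect the main difficulty to be the parameter-degree bound: the reduction to rank one relies on arranging parameter degree $<2e(R)$, and pinning this down requires an explicit system of parameters (perhaps built from bilinear forms indexed by diagonals in the $x_iy_j$) together with a concrete length computation on $R$. Matching the K\"unneth argument uniformly to \emph{every} divisor class $a\in\mathbb{Z}$ is the other delicate point -- one needs the unavoidable intermediate cohomology to appear in twists $k$ where $M_{a,b}$ actually carries the offending graded piece, and this forces genuine constraints on which $(m,n)$ work.
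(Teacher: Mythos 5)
There is a fatal gap at the very first step: the rings you propose, $R=k[x_0,\dots,x_m]\#k[y_0,\dots,y_n]$, are in fact Cohen--Macaulay, contrary to your claim. This $R$ is the determinantal ring cut out by the $2\times 2$ minors of a generic $(m+1)\times(n+1)$ matrix, hence Cohen--Macaulay by Hochster--Eagon; one also sees it directly from the K\"unneth formula (\ref{equation--local cohomology of segre product}): $H^i_{\m_A}(A)$ is nonzero only for $i=m+1$ and lives in degrees $\le -(m+1)$, while $A$ lives in degrees $\ge 0$, so the terms $A\#H^q_{\m_B}(B)$ and $H^q_{\m_A}(A)\#B$ vanish and the cross terms contribute only to $H^{m+n+1}_{\m_R}(R)$. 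Thus $R$ is its own rank one very small MCM (for a CM local ring with infinite residue field the parameter degree equals $e(R)$, attained by a minimal reduction), and no choice of $(m,n)$ gives a counterexample. Relatedly, your key cohomological assertion --- that no line bundle on $\P^N$, $N\ge 2$, has all intermediate cohomology zero --- is exactly backwards: $H^i(\P^N,\O(k))=0$ for all $0<i<N$ and all $k$. Consequently the K\"unneth obstruction you hope for never materializes for a product of projective spaces: the only potentially nonzero intermediate groups are $H^0(\P^m,\O(a+k))\otimes H^n(\P^n,\O(b+k))$ and $H^m(\P^m,\O(a+k))\otimes H^0(\P^n,\O(b+k))$, and these vanish for all $k$ whenever $-m\le a-b\le n$, so there are many rank one MCMs $M_{a,b}$.

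What the paper does differently, and what your setup is missing, is to force one factor to have \emph{positive} $a$-invariant: $A=K[x_1,\dots,x_n]/(f)$ with $\deg f=d>n$ has $a_A=d-n\ge 1$, and with $B=K[s,t]$ (so $a_B=-2$) one gets $a_A+a_B>-2$, whence Lemma \ref{lemma--MCM over Segre products} shows that \emph{no} twist $A[j]\#B$ is MCM --- the window $a_A<j<-a_B$ is empty. For your polynomial rings $a_A=-(m+1)$ is very negative and that window is wide open (it contains $j=0$, i.e., $R$ itself). Two further points where the paper supplies what your sketch leaves open: the identification of $Cl(R)$ with $\mathbb{Z}$ generated by $A[1]\#B$ requires $A$ to be a UFD, which is arranged via SGA2 for complete intersections with isolated singularity of dimension $\ge 4$ (and descends to the completion by Flenner's theorem); and the reduction to rank one is carried out not by an ad hoc system of parameters but via the St\"uckrad--Vogel inequality $l(R/(\underline{x}))\le e(\underline{x},R)+I(R)$ together with an explicit computation showing $I(\widehat{R}^{\m_R})=n+2<(n+1)(n-1)=e(\widehat{R}^{\m_R})$, as in Lemma \ref{lemma--very small implies rank one} and Corollary \ref{corollary--main corollary}.
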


We will make use of the Segre product of graded rings to obtain a family of counter-examples of Conjecture \ref{conjecture--very small MCM}. The basic idea is as follows: we first prove in Section 2 that under some conditions on the $I$-invariant and multiplicity, very small MCM must have rank one, and then in Section 3 we do computations over a family of Segre product rings to show that they satisfy the conditions in Section 2 and that rank one MCM does not exist over these rings, thus these rings are counter-examples of Conjecture \ref{conjecture--very small MCM}.

\subsection*{Acknowledgement}
I would like to thank Craig Huneke, Anurag Singh, Ehsan Tavanfar, Bernd Ulrich and Uli Walther for many valuable discussions. I would also like to thank the referee for his/her comments on Conjecture \ref{conjecture: small MCM of rank <=N} and for pointing out Remark \ref{remark: non-free small MCM of rank <=N}.

\section{Very small MCM and rank one MCM}

In this section we make some elementary observations on very small MCM. We will see that under certain conditions on $R$, the very small condition will force the module to have rank one. We begin with the following definition.

\begin{definition}
\label{definition--I-invariant}
Let $(R,\m)$ be a local ring of dimension $n$ such that $H_\m^i(R)$ has finite length for every $i<n$. Then we define $$I(R)=\sum_{i=0}^{n-1}\binom{n-1}{i}l(H_\m^i(R))$$ to be the {\it $I$-invariant} of $R$.
\end{definition}

\begin{remark}
\label{remark--remark on finite local cohomology and I-invariant}
It is well-known (for example, see \cite{StuckradandVogelEineVerallgemeinerungderCohenMacaulayRinge} \cite{StuckradandVogelTowardatheoryofBuchsbaumsingularities}) that when $(R,\m)$ is an excellent local domain that is Cohen-Macaulay on the punctured spectrum, $H_\m^i(R)$ has finite length for every $i<n$, and in this case we always have $$0\leq l(R/(\underline{x}))-e(\underline{x}, R)\leq I(R)$$ for every system of parameters $\underline{x}=x_1,\dots,x_n$. %Moreover, if the right inequality above is an equality for every system of parameters $\underline{x}=x_1,\dots,x_n$, $R$ is called a {\it Buchsbaum} ring. We refer to \cite{} for more details.
\end{remark}

As an easy consequence, we observe the following:
\begin{lemma}
\label{lemma--very small implies rank one}
Suppose $(R,\m)$ is an excellent local domain that is Cohen-Macaulay on the punctured spectrum. Suppose also that $R/\m$ is infinite. If $I(R)<e(R)$, then every very small MCM has rank one.
\end{lemma}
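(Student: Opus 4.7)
The plan is to exploit the fact that over a domain the multiplicity of an MCM is an integer multiple of $e(R)$, combined with the uniform bound $\ell(R/\underline{x}R) \leq e(R) + I(R)$ from Remark 2.2, so that the hypothesis $I(R) < e(R)$ pins the rank down to one.

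First I would fix notation: let $M$ be a very small MCM over $R$ and set $n = \dim R$. Because $R/\m$ is infinite, I can choose a system of parameters $\underline{x} = x_1,\ldots,x_n$ generating a minimal reduction of $\m$. The point of this choice is that we then have $e(\underline{x},R) = e(R)$ and $e(\underline{x},M) = e(M)$.

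Next I would verify that $M$ has a well-defined positive rank. Since $M$ is MCM over the local domain $R$, every associated prime $\mathfrak{p}$ of $M$ satisfies $\dim R/\mathfrak{p} \geq n$, which forces $\mathfrak{p} = (0)$; hence $M$ is torsion-free and, being nonzero, has $\rank M \geq 1$. Combined with the associativity formula for multiplicities over a domain, this yields
\[
e(M) \;=\; e(\underline{x},M) \;=\; \rank(M)\cdot e(\underline{x},R) \;=\; \rank(M)\cdot e(R).
\]

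Finally I would apply the very small hypothesis and Remark 2.2. The very small condition gives $e(M)\leq \ell(R/\underline{x}R)$, while Remark 2.2 (applied to the parameter ideal $\underline{x}$) gives $\ell(R/\underline{x}R)\leq e(\underline{x},R) + I(R) = e(R) + I(R)$. Stringing these inequalities together with the identity from the previous paragraph,
\[
\rank(M)\cdot e(R) \;=\; e(M) \;\leq\; \ell(R/\underline{x}R) \;\leq\; e(R) + I(R) \;<\; 2 e(R),
\]
where the final strict inequality is precisely the assumption $I(R) < e(R)$. Dividing through by $e(R) > 0$ yields $\rank(M) < 2$, and together with $\rank(M)\geq 1$ this forces $\rank(M) = 1$.

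There is no single hard step here; the only thing to be careful about is marshalling the pieces cleanly, in particular making sure the chosen $\underline{x}$ simultaneously is a minimal reduction of $\m$ (so the two multiplicities agree) and a system of parameters on which Remark 2.2 applies (which requires $R$ to have finite-length cohomology below the top, guaranteed by the Cohen-Macaulay-on-the-punctured-spectrum hypothesis together with excellence).
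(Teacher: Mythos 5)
Your proof is correct and follows essentially the same route as the paper: choose a minimal reduction of $\m$ using the infinite residue field, write $e(M)=\rank(M)\cdot e(R)$, and combine the very small inequality with the bound $l(R/(\underline{x}))\leq e(R)+I(R)$ from Remark \ref{remark--remark on finite local cohomology and I-invariant} to force $\rank(M)<2$. The extra details you supply (torsion-freeness of an MCM over a domain and the associativity formula) are correct elaborations of steps the paper leaves implicit.
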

\begin{proof}
We may pick $\underline{x}=x_1,\dots,x_n$ a minimal reduction of $\m$ since $R/\m$ is infinite. We know that $e(\underline{x}, R)=e(R)$. If $M$ is a very small MCM over $R$, then by Remark \ref{remark--remark on finite local cohomology and I-invariant}, $$\rank(M)\cdot e(R)=e(M)\leq \min\{l(R/I)| \mbox{$I$ is generated by a system of parameters}\}$$ $$\leq l(R/(\underline{x}))\leq e(R)+I(R)<2\cdot e(R).$$
This shows that $M$ must have rank one.
\end{proof}

When $(R,\m)$ is a normal local domain, every rank one MCM is reflexive, and hence isomorphic to a pure height one ideal of $R$, therefore it corresponds to an element in the class group $Cl(R)$. From this we immediately get:

\begin{proposition}
\label{proposition--non existence of very small over certain UFD}
Suppose $(R,\m)$ is an excellent local UFD of dimension 3, with $R/\m$ infinite. Suppose $R$ is not Cohen-Macaulay and $I(R)<e(R)$, then $R$ does not admit very small MCM.
\end{proposition}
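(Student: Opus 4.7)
The plan is to chain Lemma~\ref{lemma--very small implies rank one} with a UFD-reflexivity argument ruling out rank one MCMs entirely. First I would verify that Lemma~\ref{lemma--very small implies rank one} applies: since $R$ is a UFD of dimension $3$, it is normal, and for any non-maximal prime $\bp$ the ring $R_\bp$ is a normal local ring of dimension at most $2$, hence Cohen-Macaulay. Thus $R$ is Cohen-Macaulay on the punctured spectrum, every hypothesis of Lemma~\ref{lemma--very small implies rank one} holds, and any very small MCM $M$ over $R$ has rank one.

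It therefore suffices to show $R$ admits no rank one MCM at all. Given such an $M$, torsion-freeness places $M$ inside its reflexive hull $M^{**}$, which is a divisorial ideal; since $R$ is a UFD the class group is trivial, so $M^{**} \cong R$. Let $C$ be the cokernel of the resulting embedding $M \hookrightarrow R$; by construction $\Supp(C)$ has codimension at least $2$ in $\Spec R$.

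The main step is to upgrade this to $C$ having finite length. For any height-$2$ prime $\bp$, catenarity of $R$ gives $\dim R/\bp = 1$, and the standard inequality $\depth_R M \leq \depth_{R_\bp} M_\bp + \dim R/\bp$ yields $\depth M_\bp \geq 2$. Since $R_\bp$ is a two-dimensional Cohen-Macaulay normal local ring, $M_\bp$ is a rank one MCM over such a ring, hence satisfies Serre's condition $(S_2)$ and is therefore reflexive: $M_\bp = (M_\bp)^{**} = (M^{**})_\bp$. This forces $C_\bp = 0$ at every height-$2$ prime, which combined with $\codim \Supp(C) \geq 2$ gives $\Supp(C) \subseteq \{\m\}$, so $C$ has finite length.

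Finally, the long exact sequence of $H^\bullet_\m$ applied to $0 \to M \to R \to C \to 0$, using $H^i_\m(M) = 0$ for $i < 3$ (since $M$ is MCM) and $H^i_\m(C) = 0$ for $i > 0$ (since $C$ has finite length), forces $H^1_\m(R) = H^2_\m(R) = 0$. Combined with $H^0_\m(R) = 0$ (as $R$ is a domain), this means $R$ is Cohen-Macaulay, contradicting the hypothesis. The main obstacle in this plan is the third-paragraph upgrade from ``codimension $\geq 2$ support'' to ``finite length''; the raw UFD argument only gives the former, and one must invoke the depth-localization inequality together with the fact that rank one MCMs over two-dimensional CM normal local rings are reflexive in order to eliminate height-$2$ primes from $\Supp(C)$.
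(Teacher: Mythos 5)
Your proposal is correct and takes essentially the same route as the paper: Lemma~\ref{lemma--very small implies rank one} reduces the problem to rank one MCMs, and triviality of $Cl(R)$ for a UFD then forces any rank one MCM to be isomorphic to $R$ itself, contradicting the assumption that $R$ is not Cohen-Macaulay. The only difference is that the paper simply invokes the standard fact that a rank one MCM over a normal local domain is reflexive (hence a divisorial ideal), whereas you re-derive this via the depth-localization inequality at height-two primes and the local cohomology long exact sequence; the extra detail is sound but does not change the argument.
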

\begin{proof}
It is easy to see that $R$ satisfies the hypothesis of Lemma \ref{lemma--very small implies rank one}. Hence every very small MCM must have rank one. Since $R$ is a UFD, $Cl(R)$ is trivial and thus every rank one reflexive module is isomorphic to $R$. Since $R$ is not Cohen-Macaulay, it follows that rank one MCM does not exists.
\end{proof}

In general, constructing UFDs that are not Cohen-Macaulay turns out to be difficult. In fact, over the complex numbers, complete local UFDs of dimension $\leq 4$ are Cohen-Macaulay \cite{HartshorneOgusOnthefactorialityoflocalrings} (this is not true if we do not assume completeness, see \cite{TavanfarReductiontoUFD}). However, examples of $(R,\m)$ that satisfies Proposition \ref{proposition--non existence of very small over certain UFD} do exist in positive characteristic, see Example 3 and Theorem 2.5 in \cite{SchenzelMarceloNonCMUFDinsmalldimensions} (here the $I$-invariant is 1, so the condition $I(R)<e(R)$ is obvious). Although this example only works in characteristic 2 and is not complete, it already strongly suggests that Conjecture \ref{conjecture--very small MCM} might be false in general. In the next section we will use Segre product to construct complete local rings in arbitrary equal characteristic that do not admit very small MCM.

\section{Computations over Segre products}

\subsection*{Segre product}Let $A$ and $B$ be $\mathbb{N}$-graded rings over a field $A_0=B_0=K$. The {\it Segre product} of $A$ and $B$ is the ring $$R=A\#B=\oplus_{j\geq 0}A_j\otimes_KB_j$$ This ring has a natural grading in which $R_j=A_j\otimes_KB_j$. If $M$ and $N$ are graded modules over $A$ and $B$ respectively, their Segre product is the graded $R$-module $M\#N=\oplus_{j\geq 0}M_j\otimes_KN_j$. If $A$ and $B$ are both normal domains, then $R=A\#B$ is also normal since it is a direct summand of the normal ring $A\otimes_KB$. For reflexive modules $M$ and $N$ over $A$ and $B$ respectively, we have the Kunneth formula for local cohomology \cite{GotoWatanabeOngradedringsI}:
\begin{equation}
\label{equation--local cohomology of segre product}
H_{\m_R}^q(M\#N)=M\#H_{\m_B}^q(N)\oplus H_{\m_A}^q(M)\#N\oplus(\oplus_{i+j=q+1}H_{\m_A}^i(M)\#H_{\m_B}^j(N))
\end{equation}
where $\m_R$, $\m_A$, $\m_B$ denote the homogeneous maximal ideals of the corresponding rings.

\subsection*{Connections with sheaf cohomology} Suppose $A$ and $B$ are normal $\mathbb{N}$-graded $K$-algebras generated over $K$ by degree one forms. Let $X=\Proj A$ and $Y=\Proj B$, then $R$ is the section ring of $X\times Y$ with respect to the very ample line bundle $O_X(1)\boxtimes O_Y(1)$. %We could get a lot of information of $R$ from this geometric point of view.

\begin{remark}
\label{remark--class group of the cone}
Now suppose $A$ is a UFD and $B$ is a polynomial ring, both have dimension $\geq 2$. Then $Cl(X)\cong \mathbb{Z}$ and $Y$ is a projective space, so $Cl(X\times Y)=\mathbb{Z}\times \mathbb{Z}$. It follows from Excercise II.6.3 in \cite{Hartshorne} that $Cl(R)=\mathbb{Z}$, and is generated by the class of $$\oplus_{i=0}^\infty H^0(X\times Y, O_X(1+i)\boxtimes O_Y(i))\cong A[1]\#B.$$
\end{remark}

\subsection*{Main result}Now we can state and prove our result on Segre products. We first recall that the {\it $a$-invariant} of an $\mathbb{N}$-graded Cohen-Macaulay ring $R$ is defined to be $a_R=\sup\{j| [H_{\m_R}^{\dim R}(R)]_j\neq 0\}$.

\begin{lemma}[{\it cf.} Example 4.4.13 in \cite{GotoWatanabeOngradedringsI}]
\label{lemma--MCM over Segre products}
Suppose $\dim A\geq 2$ and $\dim B\geq 2$. Then $A[n]\#B$ is MCM over $R=A\#B$ if and only if $a_A<n<-a_B$. In particular there exists $n$ such that $A[n]\#B$ is MCM if and only if $a_A+a_B\leq -2$.
\end{lemma}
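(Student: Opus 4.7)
The plan is to apply the Künneth formula \eqref{equation--local cohomology of segre product} with $M = A[n]$ and $N = B$ (both reflexive, being free of rank one) and determine for which $q$ strictly less than $\dim R = \dim A + \dim B - 1$ the module $H_{\m_R}^q(A[n]\#B)$ vanishes. Since $a_A$ and $a_B$ are defined through top local cohomology, I implicitly assume $A$ and $B$ are Cohen-Macaulay, so $H_{\m_A}^i(A[n]) = 0$ unless $i = d_A := \dim A$, and $H_{\m_B}^j(B) = 0$ unless $j = d_B := \dim B$. Under this vanishing, inspection of the three summands on the right-hand side of \eqref{equation--local cohomology of segre product} shows that the only values of $q$ for which $H_{\m_R}^q(A[n]\#B)$ can be nonzero are $q = d_A$, $q = d_B$, and the diagonal contribution at $q = d_A + d_B - 1$ from $i = d_A$, $j = d_B$. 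The latter equals $\dim R$, so its nonvanishing is forced and irrelevant; and since $d_A, d_B \geq 2$, the indices $d_A$ and $d_B$ both lie strictly below the top.

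Thus the MCM condition collapses to the two vanishing statements
\[
A[n]\# H_{\m_B}^{d_B}(B) = 0 \qquad\text{and}\qquad H_{\m_A}^{d_A}(A)[n]\# B = 0.
\]
Using that $A$ and $B$ are generated in degree one, so that $A_k \neq 0$ and $B_k \neq 0$ for every $k \geq 0$, the first equation is equivalent to $H_{\m_B}^{d_B}(B)_j = 0$ for every $j \geq 0$ with $j + n \geq 0$, i.e., for every $j \geq \max(0,-n)$; the defining property of $a_B$ converts this into the single inequality $n < -a_B$. By the symmetric argument, the second equation is equivalent to $a_A < n$. Combining gives exactly the claimed equivalence $a_A < n < -a_B$.

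The ``in particular'' clause is then immediate: the open interval $(a_A, -a_B)$ contains an integer $n$ if and only if $a_A + 1 \leq -a_B - 1$, that is, $a_A + a_B \leq -2$. I do not anticipate any real obstacle; the only step demanding care is the graded-component bookkeeping at the boundary $j = -n$ of the shifted module, so that one extracts the sharp strict inequalities $a_A < n$ and $n < -a_B$ rather than weaker ones, and so that one correctly identifies $q = d_A + d_B - 1$ as the sole ``allowed'' surviving Künneth term.
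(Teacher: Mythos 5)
Your argument is correct and is essentially the paper's proof, which likewise reduces via the K\"unneth formula \eqref{equation--local cohomology of segre product} to the vanishing of the two terms $A[n]\#H_{\m_B}^{\dim B}(B)$ and $H_{\m_A}^{\dim A}(A)[n]\#B$ and reads off $a_A<n<-a_B$. One bookkeeping point: in the K\"unneth formula the Segre product of graded modules runs over all $j\in\mathbb{Z}$, so the first vanishing condition is ``$[H_{\m_B}^{\dim B}(B)]_j=0$ for all $j\ge -n$'' rather than for all $j\ge\max(0,-n)$ (the latter would be vacuous whenever $a_B<0$); it is the former that yields the sharp inequality $n<-a_B$.
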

\begin{proof}
This follows directly from (\ref{equation--local cohomology of segre product}): the possible non-vanishing lower local cohomology modules of $A[n]\#B$ come from $A[n]\#H_{\m_B}^{\dim B}(B)$ and $H_{\m_A}^{\dim A}(A)[n]\#B$, and these are both zero if and only if $a_A<n<-a_B$.
\end{proof}

\begin{remark}
Although $A[n]\#B$ is not MCM for any $n$ when $a_A+a_B>-2$, $R=A\#B$ might still have rank one MCM in many cases. For example, it follows from Proposition 2.3 (and its proof) in \cite{MaSplittinginintegralextensions} that if $X=\Proj A$ and $Y=\Proj B$ are smooth projective curves, then $R$ has a small MCM of rank one.
\end{remark}

\begin{theorem}
\label{theorem--non existence of rank one MCM}
Let $A=\frac{K[x_1,\dots,x_n]}{(f_1,\dots,f_h)}$ be a graded complete intersection with isolated singularity at $\m_A$. Let $B=K[s,t]$ and $R=A\#B$. Set $d=\deg{f_1}+\deg f_2+\dots+\deg{f_h}$. If $d>n$ and $\dim A=n-h\geq 4$, then $R$ and $\widehat{R}^{\m_R}$ do not have any rank one MCM.
\end{theorem}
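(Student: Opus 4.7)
The plan is to reduce to a class group computation: since $R$ is a Segre product of the normal domains $A$ and $B$, it is a normal domain, so every rank one MCM is reflexive and thus corresponds to a class in $Cl(R)$. I will show $Cl(R)\cong\mathbb{Z}$ with generator $[A[1]\#B]$, so the rank one reflexive $R$-modules are (up to isomorphism as $R$-modules) the $A[m]\#B$ for $m\geq 0$ and the $A\#B[m]$ for $m\geq 0$. Lemma~\ref{lemma--MCM over Segre products} will then rule each of them out using the numerical hypothesis $d>n$.

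For the numerical step: $a_A=d-n$ for a graded complete intersection in $n$ degree-one variables with relations of total degree $d$, and $a_B=-2$ for $B=K[s,t]$. The hypothesis $d>n$ gives $a_A\geq 1$, so Lemma~\ref{lemma--MCM over Segre products} forces $A[m]\#B$ to be MCM iff $d-n<m<2$, which has no integer solution; symmetrically $A\#B[m]\cong B[m]\#A$ is MCM iff $-2<m<n-d\leq-1$, again impossible.

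For the class group, the hypothesis $\dim A=n-h\geq 4$ enters to ensure $A$ is a UFD: $A$ is a complete intersection of dimension $\geq 4$ with isolated singularity, hence regular in codimension $\leq 3$, so Grothendieck's parafactoriality theorem (SGA~2, Exp.~XI) implies $A_{\m_A}$ is factorial; since $A$ is $\mathbb{N}$-graded with unique maximal graded ideal, this forces $A$ to be a UFD. Remark~\ref{remark--class group of the cone} then gives $Cl(R)\cong\mathbb{Z}$, generated by $[A[1]\#B]$, completing the argument for $R$ itself.

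For the completion $\widehat{R}^{\m_R}$: the isolated singularity of $A$ together with smoothness of $B$ gives that $\Proj A\times\Proj B$ is smooth, so $R_{(\m_R)}$ is a normal excellent local ring with an isolated singularity of dimension $\geq 5$. By Flenner's theorem on divisor class groups of graded isolated singularities, $Cl(R_{(\m_R)})\to Cl(\widehat{R}^{\m_R})$ is an isomorphism, and since local cohomology commutes with completion, the same classification and the same Lemma~\ref{lemma--MCM over Segre products} analysis rule out rank one MCM over $\widehat{R}^{\m_R}$. The main obstacle I anticipate is verifying this class group comparison under completion in the precise form needed; the numerical computation and the UFD property of $A$ are both standard.
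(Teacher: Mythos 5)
Your proposal is correct and follows essentially the same route as the paper: $A$ is a UFD by Grothendieck's parafactoriality theorem, $Cl(R)\cong\mathbb{Z}$ is generated by $[A[1]\#B]$ so every rank one reflexive is a shifted Segre product, Lemma \ref{lemma--MCM over Segre products} rules these out since $a_A+a_B=d-n-2>-2$, and Flenner's theorem handles the completion. Your parametrization of the negative classes by $A\#B[m]$ rather than $A[-m]\#B$ is an equivalent cosmetic variant, and the one point you flag as uncertain (the hypotheses of Flenner's comparison) is settled exactly as you anticipate, by checking that $R$ is $R_2$ and $S_3$ using $\depth R=\dim A\geq 4$.
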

\begin{proof}
Since $A$ is a complete intersection that is regular in codimension $3$, $A$ is a UFD by Corollaire XI 3.14 in \cite{GrothendieckSGA2}. By Remark \ref{remark--class group of the cone} we know that $Cl(R)=\mathbb{Z}$ and is generated by the class of $$P=A[1]\#B\cong (x_1s, x_2s,\dots,x_ns).$$

It is easy to check that $Q=A[-1]\#B\cong (x_1s, x_1t)$ represents the inverse of $[P]$ in $Cl(R)$. Since $Cl(R)=\mathbb{Z}$, every rank one MCM must be isomorphic to a symbolic power of $P$ or $Q$, which is $A[n]\#B$ for some $n$ (this is because, under the notation of Remark \ref{remark--class group of the cone}, the class of $A[n]\#B \cong \oplus_{i=0}^\infty H^0(X\times Y, O_X(n+i)\boxtimes O_Y(i))$ is $n$ times the class of $P=A[1]\#B$ in the divisor class group $Cl(R)$, and since it is a rank one reflexive module, it must be isomorphic to $P^{(n)}$). However, since $A$ and $B$ are normal Cohen-Macaulay rings, Lemma \ref{lemma--MCM over Segre products} shows that there is no such $n$ because $a_A+a_B=d-n-2>-2$.

In the case of $\widehat{R}^{\m_R}$, it is enough to observe that $Cl(\widehat{R}^{\m_R})$ is still generated by $[P\widehat{R}^{\m_R}]$. But this follows from a result of Flenner (see 1.5 of \cite{FlennerDivisorclassgroupsofquasihomgeneoussingularities}): if $R$ is $\mathbb{N}$-graded, $R_2$ and $S_3$, then the natural map $Cl(R)\to Cl(\widehat{R}^{\m_R})$ is an isomorphism. Since our ring $R$ is an isolated singularity with $\depth R=\dim R-1=\dim A\geq 4$, $R$ satisfies the hypothesis.
\end{proof}

As an application of Theorem \ref{theorem--non existence of rank one MCM}, we obtain the following which disproves Conjecture \ref{conjecture--very small MCM}:

\begin{corollary}
\label{corollary--main corollary}
Let $R=\frac{K[x_1,\dots,x_n]}{f} \# K[s, t]$ with $K$ an arbitrary infinite field. Suppose $f$ is a degree $n+1$ homogeneous polynomial with isolated singularity at the origin with $n\geq 5$ (e.g., one can take $f=x_1^{n+1}+x_2^{n+1}+\cdots+x_n^{n+1}$ when $\chara(K)\nmid n+1$). Then $\widehat{R}^{\m_R}$ does not admit very small MCM.
\end{corollary}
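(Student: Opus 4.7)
The plan is to combine Theorem 3.3 with Lemma 2.2. Our $A = K[x_1,\ldots,x_n]/(f)$ is a graded hypersurface (hence complete intersection) of degree $d = n+1$ with isolated singularity, and $\dim A = n-1 \geq 4$. Since $d = n+1 > n$, Theorem 3.3 directly applies and $\widehat{R}^{\m_R}$ admits no rank one MCM. The corollary will follow once I show every very small MCM over $\widehat{R}^{\m_R}$ has rank one, which by Lemma 2.2 reduces to verifying that $\widehat{R}^{\m_R}$ is an excellent normal local domain, Cohen-Macaulay on the punctured spectrum, with infinite residue field, together with the key inequality $I(\widehat{R}^{\m_R}) < e(\widehat{R}^{\m_R})$.

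The structural hypotheses are routine to check: $R$ is normal as a Segre product of normal domains (and a normal Noetherian local ring is a domain), so $\widehat{R}^{\m_R}$ is an excellent normal local domain; since $\Proj A$ is smooth (by the isolated singularity of $f$) and $\mathbb{P}^1$ is smooth, so is their product $\Proj A \times \mathbb{P}^1$, hence $R$ has isolated singularity at $\m_R$, and so does $\widehat{R}^{\m_R}$; the residue field is $K$, assumed infinite; and both $I(R)$ and $e(R)$ are invariant under $\m_R$-adic completion. So the heart of the proof is computing $I(R)$ and $e(R)$.

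For the multiplicity, the cleanest approach is to view $R$ as the section ring of $\Proj A \times \mathbb{P}^1 \subset \mathbb{P}^{n-1} \times \mathbb{P}^1$ under Segre, so by the standard degree formula, $e(R) = \binom{(n-2)+1}{n-2}\cdot(n+1)\cdot 1 = (n-1)(n+1) = n^2 - 1$. For the $I$-invariant, I apply the Kunneth formula (3.1) to $M = A$, $N = B$. Both $A$ and $B$ are Cohen-Macaulay, so $H^i_{\m_A}(A) = 0$ for $i \neq n-1$ and $H^j_{\m_B}(B) = 0$ for $j \neq 2$. Consequently, for $q < n = \dim R$, all but one of the terms in (3.1) vanish: the summand $A \# H^2_{\m_B}(B)$ vanishes because $H^2_{\m_B}(B)$ is concentrated in degrees $\leq -2$ while $A$ lives in nonnegative degrees; and the cross terms $\bigoplus_{i+j=q+1} H^i_{\m_A}(A) \# H^j_{\m_B}(B)$ would require $q + 1 = (n-1) + 2 = n+1 > n$, incompatible. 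Hence the unique nonzero local cohomology below the top is $H^{n-1}_{\m_R}(R) = H^{n-1}_{\m_A}(A) \# B$.

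To finish, I use $\omega_A = A(d - n) = A(1)$ and graded local duality to get $\dim_K[H^{n-1}_{\m_A}(A)]_k = \dim_K A_{1-k}$, which equals $n$ for $k = 0$, equals $1$ for $k = 1$, and vanishes for $k \geq 2$. Combined with $\dim_K B_k = k+1$, this yields $l(H^{n-1}_{\m_R}(R)) = n \cdot 1 + 1 \cdot 2 = n+2$, and thus $I(R) = n+2$. Since $n + 2 < n^2 - 1$ for $n \geq 5$ (indeed for $n \geq 3$), the desired inequality holds and the corollary follows. The main obstacle is the careful bookkeeping in the Kunneth formula, in particular the vanishing of $A \# H^2_{\m_B}(B)$ by grading support, and the graded duality computation of the length of $H^{n-1}_{\m_A}(A) \# B$.
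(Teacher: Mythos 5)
Your proposal is correct and follows essentially the same route as the paper's proof: verify $I(\widehat{R}^{\m_R})=n+2 < (n+1)(n-1)=e(\widehat{R}^{\m_R})$ so that Lemma \ref{lemma--very small implies rank one} forces any very small MCM to have rank one, then invoke Theorem \ref{theorem--non existence of rank one MCM} to rule out rank one MCMs. The only (immaterial) differences are that you compute $e(R)$ via the Segre degree formula rather than directly from the product of Hilbert functions, and you justify the length count for $H^{n-1}_{\m_A}(A)\#B$ by graded local duality with $\omega_A=A(1)$, which the paper leaves implicit.
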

\begin{proof}
Set $A=\frac{K[x_1,\dots,x_n]}{f}$ and $B=K[s,t]$. Note that $\dim A=n-1$ and $\dim R=n$. Since $\dim_K[R]_i=\dim_K[A]_i\cdot\dim_K[B]_i$, the leading term in the Hilbert polynomial of $R$ is $$\frac{e(A)}{(n-2)!}i^{n-2}\cdot i=\frac{n+1}{(n-2)!}i^{n-1}.$$ Hence we have $$e(\widehat{R}^{\m_R})=e(\m_R,R)=\frac{n+1}{(n-2)!}\times (n-1)!=(n+1)(n-1).$$ On the other hand, using (\ref{equation--local cohomology of segre product}) it is easy to see that $$H_{\m_R}^q(R)=0 \text{ for } q\leq n-2 \text{ and } H_{\m_R}^{n-1}(R)=H_{\m_A}^{n-1}(A)\#B.$$ In particular,
\begin{eqnarray*}
I(\widehat{R}^{\m_R})&=&\sum_{i=0}^{n-1}\binom{n-1}{i}l(H_{\m_R}^i(\widehat{R}^{\m_R}))=l_R(H_{\m_R}^{n-1}(R))\\
&=&\dim_K([H_{\m_A}^{n-1}(A)]_0\otimes[B]_0)+\dim_K([H_{\m_A}^{n-1}(A)]_1\otimes[B]_1)\\
&=&n+2
\end{eqnarray*}
Therefore $I(\widehat{R}^{\m_R})<e(\widehat{R}^{\m_R})$. Since $R$ and hence $\widehat{R}^{\m_R}$ are normal isolated singularities, Lemma \ref{lemma--very small implies rank one} tells us that very small MCM over $\widehat{R}^{\m_R}$ must have rank one. But since $\deg f>n$ and $n\geq 5$, Theorem \ref{theorem--non existence of rank one MCM} implies $\widehat{R}^{\m_R}$ does not have any rank one MCM.
\end{proof}

\begin{remark}
\label{remark--higher rank MCM over segre product}
It is perhaps interesting to point out that in our Theorem \ref{theorem--non existence of rank one MCM} (or Corollary \ref{corollary--main corollary}), $R$ and $\widehat{R}^{\m_R}$ do admit small MCM of higher rank. This follows from Proposition 3.2.2 of \cite{HanesThesis} and the main result of \cite{HerzogUlrichBacklinLinearMCMoverStrictcompleteintersections}.
\end{remark}

Motivated by Theorem \ref{theorem--non existence of rank one MCM} and Remark \ref{remark--higher rank MCM over segre product}, I hazard the following conjecture:

\begin{conjecture}
\label{conjecture: small MCM of rank <=N}
For every integer $N$, there exists a complete local domain $R$ that does not admit small MCM of rank $\leq N$.
\end{conjecture}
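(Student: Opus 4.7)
The plan is to extend the rank one strategy of Theorem \ref{theorem--non existence of rank one MCM} to bounded ranks. Given $N$, I would start from a Segre product $R=A\#B$ in which $A=K[x_0,\dots,x_n]/(f)$ is a hypersurface with isolated singularity of very large degree $d\gg n$, with $n$ also chosen large, and $B=K[s,t]$, so that the gap $a_A+a_B=d-n-2$ can be made arbitrarily positive. Passing to $X=\Proj A$ and $Y=\PP^1$, every small MCM $R$-module $M$ of rank $r$ corresponds, via the section ring identification, to an arithmetically Cohen--Macaulay (ACM) reflexive sheaf $\widetilde M$ on $X\times \PP^1$. This is the natural higher-rank replacement for the class group analysis used in the rank one case.

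Next I would extract numerical invariants. Restricting $\widetilde M$ to a general fiber $\{x\}\times\PP^1$ produces a vector bundle which by Grothendieck's theorem splits as $\bigoplus_{i=1}^r \O_{\PP^1}(a_i)$, with generically constant splitting type. Plugging this into the Kunneth formula (\ref{equation--local cohomology of segre product}) together with the ACM condition $H_{\m_R}^q(M)=0$ for $q<\dim R$ forces strong constraints on the $a_i$ and on the analogous splitting data obtained by restricting to $X\times\{y\}$. For fixed $r\leq N$, my hope is that letting $d$ grow relative to $n$ and $N$ eventually rules out every admissible numerical combination, in analogy with how the inequality $a_A+a_B>-2$ in Lemma \ref{lemma--MCM over Segre products} killed all the rank one candidates $A[n]\#B$ at once. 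Completing the passage from $\widehat{R}^{\m_R}$ back to $R$ would proceed as in the proof of Theorem \ref{theorem--non existence of rank one MCM}, using Flenner's comparison theorem for the appropriate analogue of the class group (\emph{e.g.} the Grothendieck group of reflexive sheaves of rank $\leq N$).

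The main obstacle is that the class group argument that made the rank one case tractable has no direct higher-rank substitute. Rank one reflexive modules are classified by $Cl(R)=\ZZ$ and thus reduce to finitely many explicit sheaves $A[n]\#B$; rank $r\geq 2$ reflexive sheaves on $X\times\PP^1$ form a genuine moduli problem which is in general very wild. The crux of the plan is to show that the ACM condition is rigid enough to reduce ACM reflexives of rank $\leq N$ on $X\times\PP^1$ to an explicitly boundable list, and this appears to require either a Horrocks-type splitting criterion on $X\times\PP^1$ (currently unknown in the generality needed) or delicate Chern class / Hilbert polynomial bookkeeping that is out of reach with the techniques used here. This is precisely why the statement is presented only as a conjecture: I expect the construction above to produce the desired $R$, but making the moduli side of the argument precise seems to demand genuinely new input beyond the class group calculations of Section~3.
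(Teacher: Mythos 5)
The statement you are addressing is presented in the paper only as a conjecture; the paper contains no proof of it, and your proposal, by your own admission in the final paragraph, is not one either. So the honest verdict is that both you and the paper are in the same position: the result is open. That said, your sketch is a reasonable elaboration of exactly the heuristic the paper uses to motivate the conjecture (push the construction of Theorem \ref{theorem--non existence of rank one MCM} to higher rank), and your identification of the crux --- that rank one reflexives are controlled by $Cl(R)=\mathbb{Z}$ and reduce to the explicit list $A[n]\#B$, whereas rank $r\geq 2$ ACM reflexive sheaves on $X\times\PP^1$ form a wild moduli problem with no Horrocks-type splitting criterion available --- is accurate and is precisely where any attempt along these lines currently stalls.

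One concrete point your sketch should confront but does not: Remark \ref{remark--higher rank MCM over segre product} shows that the rings produced by Theorem \ref{theorem--non existence of rank one MCM} \emph{do} admit small MCM of higher rank, via Proposition 3.2.2 of \cite{HanesThesis} combined with \cite{HerzogUlrichBacklinLinearMCMoverStrictcompleteintersections}. So simply taking $d\gg n\gg 0$ in the same family does not obviously work; you would need a quantitative lower bound showing that the minimal rank of the MCM modules arising from those constructions (and from any other construction) tends to infinity with the parameters, and your numerical bookkeeping with splitting types on $\{x\}\times\PP^1$ and $X\times\{y\}$ is not yet precise enough to deliver such a bound. A second, smaller issue: Flenner's theorem compares class groups, and there is no analogous statement in the paper for whatever invariant you would use to classify rank $\leq N$ reflexives under completion, so the passage from $R$ to $\widehat{R}^{\m_R}$ would also need a new argument. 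Note also that the paper's Remark \ref{remark: non-free small MCM of rank <=N} settles only the much weaker \emph{non-free} version of the statement, over rings that are themselves Cohen--Macaulay, so it gives no route to the conjecture either.
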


\begin{remark}
\label{remark: non-free small MCM of rank <=N}
We point out that, for every $N$, there exists a complete local domain $R$ that does not admit  {\it non-free} MCM of rank $\leq N$. Suppose $R$ is a hypersurface with singular locus of codimension $c\geq 2N+2$ (e.g., one can take $R=\frac{K[[x_1,\dots,x_n]]}{x_1^2+\cdots+x_n^2}$ for $n\geq 2N+3$, where $K$ is a perfect field of $\chara(K)\neq 2$), it then follows from Corollary 2.2 in \cite{BrunsGeneralizedPrincipalIdeal} that any non-free MCM $M$ has rank at least $\frac{c-1}{2}>N$.
\end{remark}

\bibliographystyle{skalpha}
\bibliography{CommonBib}
\end{document}